\theoremstyle{plain}
\newtheorem{theorem}{Theorem}[section]
\newtheorem{lemma}[theorem]{Lemma}
\newtheorem{corollary}[theorem]{Corollary}
\theoremstyle{break}
\newtheorem{remark}[theorem]{Remark}
\newtheorem{definition}[theorem]{Definition}
\newtheorem{question}[theorem]{Question}
\newtheorem{exam}[theorem]{Example}
\newtheorem{examples}[theorem]{Examples}
\newtheorem{proof}{Proof}
\newtheorem{acknowledgement}{Acknowledgement}
\newcommand{\myemail}[1]{\indent \emph{E-mail:} {\tt #1}}
\newcommand{\myaddress}[1]{\indent {\sc #1}\par}
\begin{document}

\title{Naive vs. genuine $\mathbb A^1$-connectedness}
\author{Anand Sawant}
\date{}

\maketitle

\begin{abstract}
We show that the triviality of sections of the sheaf of $\mathbb A^1$-chain connected components of a space over finitely generated separable field extensions of the base field is not sufficient to ensure the triviality of the sheaf of its $\mathbb A^1$-chain connected components, contrary to the situation with genuine $\mathbb A^1$-connected components.  As a consequence, we show that there exists an $\mathbb A^1$-connected scheme for which the Morel-Voevodsky singular construction is not $\mathbb A^1$-local.
\end{abstract}

\section{Introduction}
\label{section Introduction}

Let $k$ be a field and let $Sm/k$ denote the category of smooth, finite-type schemes over $k$.  In the 1990's, Morel and Voevodsky \cite{Morel-Voevodsky} constructed the \emph{$\mathbb A^1$-homotopy category} $\mathcal{H}(k)$ by taking a suitable localization of the category of simplicial sheaves of sets on $Sm/k$ for the Nisnevich topology.  Objects of $\mathcal H(k)$ are often called \emph{spaces}.  Analgous to algebraic topology, one then studies the $\mathbb A^1$-homotopy sheaves of a (pointed) space $(\mathcal X, x)$ - the sheaf of $\mathbb A^1$-connected components $\pi_0^{\mathbb A^1}(\mathcal X)$, which is a sheaf of sets and the higher homotopy sheaves $\pi_n^{\mathbb A^1}(\mathcal X, x)$, for $n\geq 1$, which are sheaves of groups.  We will use the notation and terminology of \cite{Morel-Voevodsky}.  Any (not necessarily smooth) scheme $X$ over $k$ can be viewed as an object of the $\mathbb A^1$-homotopy category $\mathcal{H}(k)$ (see the conventions stated at the beginning of Section \ref{section A1-connectedness}).   Recent works in $\mathbb A^1$-homotopy theory have indicated that the $\mathbb A^1$-homotopy sheaves of schemes are often related to some of their interesting classical invariants.

The simplest of objects in classical topology are the discrete topological spaces.  The analogous notion in $\mathbb A^1$-homotopy theory is that of \emph{$\mathbb A^1$-invariant} sheaves (see Section \ref{section A1-connectedness} for precise definitions).  In topology, the set of connected components of a topological space and the homotopy groups of a (pointed) topological space are discrete as topological spaces.  Analogously, one can ask if the $\mathbb A^1$-homotopy sheaves of a (pointed) space $\mathcal X$ are $\mathbb A^1$-invariant.  It has been shown by Morel \cite[Theorem 6.1, Corollary 6.2]{Morel} that the higher homotopy sheaves $\pi_n^{\mathbb A^1}(\mathcal X, x)$, for $n\geq 1$, are $\mathbb A^1$-invariant.  In fact, Morel shows much more - these higher $\mathbb A^1$-homotopy sheaves are \emph{strongly $\mathbb A^1$-invariant} in the sense of \cite[Definition 1.7]{Morel} (this shows that the higher $\mathbb A^1$-homotopy sheaves are very special; for instance, they are birational invariants of smooth, proper schemes).  However, $\mathbb A^1$-invariance of the sheaf of $\mathbb A^1$-connected components is not yet known; this has been conjectured by Morel \cite[Conjecture 1.12]{Morel}.  It is worthwhile to mention that $\pi_0^{\mathbb A^1}$ fails to be a birational invariant of smooth, proper schemes \cite[Example 4.8]{Balwe-Hogadi-Sawant}.  

There are two notions of $\mathbb A^1$-connectedness in unstable $\mathbb A^1$-homotopy theory.  The \emph{naive} notion is that of \emph{$\mathbb A^1$-chain connected components} of a space (see Definition \ref{definition A1-chain connected components}), which is obtained by taking the Nisnevich sheafification of the presheaf that associates with any smooth scheme $U$ the set of morphisms from $U$ to the space in question modulo the equivalence relation generated by naive $\mathbb A^1$-homotopies.  On the other hand, the \emph{genuine} notion is that of \emph{$\mathbb A^1$-connected components} (see Definition \ref{definition A1-connected components}) introduced by Morel-Voevodsky.  These two notions do not coincide in general, not even for smooth projective varieties over $\mathbb C$ (see \cite[Section 4]{Balwe-Hogadi-Sawant} for the first examples).  Given a scheme $X$ over $k$, one can infinitely iterate the construction of $\mathbb A^1$-chain connected components to obtain the so-called \emph{universal $\mathbb A^1$-invariant quotient} of $X$, which is isomorphic to $\pi_0^{\mathbb A^1}(X)$ provided the latter sheaf is $\mathbb A^1$-invariant (that is, Morel's conjecture holds for $X$).  We recall these notions and known results about them in Section \ref{section A1-connectedness}.

A natural question is to characterize genuine $\mathbb A^1$-connectedness of a scheme $X$ over $k$ in terms of triviality of sections of $\pi_0^{\mathbb A^1}(X)$ over field extensions of $k$.  A result of Morel states that $\mathbb A^1$-connectedness of a scheme $X$ over an infinite field $k$ in the genuine sense (that is, triviality of $\pi_0^{\mathbb A^1}(X)$ as a sheaf) is equivalent to the triviality of $\pi_0^{\mathbb A^1}(X)({\rm Spec}~ F)$, where $F$ runs over all finitely generated separable field extensions of $k$.  In this short note, we examine the analogous property for the sheaf of $\mathbb A^1$-chain connected components in Section \ref{section main result} (see Theorem \ref{theorem chain connected}).  As a consequence, we obtain an example of an $\mathbb A^1$-connected singular proper scheme $X$ for which the Morel-Voevodsky singular construction ${\rm Sing_*^{\mathbb A^1}} X$ is not $\mathbb A^1$-local (see Example \ref{example main theorem}).

\section{Connectedness in unstable $\mathbb A^1$-homotopy theory}
\label{section A1-connectedness}

We begin this section by setting up the notation and conventions that will be used throughout the paper.

Fix a base field $k$.  We will henceforth denote by $Sm/k$ the big Nisnevich site of smooth, finite-type schemes over $k$.  We begin with the category of simplicial sheaves over $Sm/k$.  Any scheme $X$ over $k$ can be seen as an object of this category as follows: consider the \emph{functor of points} $h_X$ of $X$, which is the sheaf that associates with every $U \in Sm/k$ the set of morphisms of schemes over $k$ from $U$ to $X$.  Any Nisnevich sheaf $\mathcal F$ on $Sm/k$ can be viewed as a simplicially constant simplicial sheaf.  More precisely, one considers the simplicial sheaf in which the sheaf at every level is $\mathcal F$ and all the face and degeneracy maps are given by the identity map.  We will always denote the simplicial sheaf corresponding to $h_X$ for a scheme $X$ over $k$ by the same letter $X$.

A morphism $\mathcal X \to \mathcal Y$ of simplicial sheaves of sets on $Sm/k$ is a \emph{local weak equivalence} if it induces an isomorphism on every stalk.  The \emph{Nisnevich local injective model structure} on this category is the one in which the morphism of simplicial sheaves is a cofibration (resp. a weak equivalence) if and only if it is a monomorphism (resp. a local weak equivalence).  The corresponding homotopy category is called the \emph{simplicial homotopy category} and is denoted by $\mathcal H_s(k)$. The left Bousfield localization of the Nisnevich local injective model structure with respect to the collection of all projection morphisms $\mathcal X \times \mathbb A^1 \to \mathcal X$, as $\mathcal X$ runs over all simplicial sheaves, is called the \emph{$\mathbb A^1$-model structure}. The associated homotopy category is called the \emph{$\mathbb A^1$-homotopy category} and is denoted by $\mathcal H(k)$.  We will denote by $\ast$ the trivial one-point sheaf on $Sm/k$.  We will abuse the notation and use $\ast$ to also denote a set with one element, whenever there is no confusion.

\begin{definition}
\label{definition A1-local}
A space (that is, a simplicial Nisnevich sheaf of sets on $Sm/k$) $\mathcal X$ is said to be \emph{$\mathbb A^1$-local} if the projection map $U \times \mathbb A^1 \to U$ induces a bijection
\[
{\rm Hom}_{\mathcal H_s(k)}(U, \mathcal X) \to {\rm Hom}_{\mathcal H_s(k)}(U \times \mathbb A^1, \mathcal X).
\]
for every $U \in Sm/k$.  Note that a Nisnevich sheaf $\mathcal F$ on $Sm/k$ is $\mathbb A^1$-local if and only if it is \emph{$\mathbb A^1$-invariant}, that is, if the projection map $U \times \mathbb A^1 \to U$ induces a bijection $\mathcal F(U) \to \mathcal F(U \times \mathbb A^1)$, for every $U \in Sm/k$.  Following standard convention, we say that a scheme is \emph{$\mathbb A^1$-rigid} if it is $\mathbb A^1$-local as a space.
\end{definition}

Let $\mathcal X$ be a space.  We now recall the \emph{singular construction} on $\mathcal X$ defined by Morel-Voevodsky \cite[p. 87-88]{Morel-Voevodsky}.  Define ${\rm Sing_*^{\mathbb A^1}} \mathcal X$ to be the simplicial sheaf given by
\[
({\rm Sing_*^{\mathbb A^1}} \mathcal X)_n = \underline{\rm Hom}(\Delta_n,\mathcal X_n), 
\]
\noindent where $\Delta_{\bullet}$ denotes the cosimplicial scheme  
\[
\Delta_n  = {\rm Spec} \left(\frac{k[x_0,...,x_n]}{(\sum_ix_i=1)}\right)
\]
\noindent with natural face and degeneracy maps analogous to the ones on topological simplices.  There exists a natural transformation $Id \to {\rm Sing_*^{\mathbb A^1}}$ such that for any simplicial sheaf $\mathcal X$, the morphism $\mathcal X \to {\rm Sing_*^{\mathbb A^1}}(\mathcal X)$ is an $\mathbb A^1$-weak equivalence.  Observe that the singular construction ${\rm Sing_*^{\mathbb A^1}}$ takes naive $\mathbb A^1$-homotopies to simplicial homotopies.

Given a simplicial sheaf of sets $\mathcal X$ on $Sm/k$, we will denote by $\pi_0(\mathcal X)$ the presheaf on $Sm/k$ that associates with $U \in Sm/k$ the coequalizer of the diagram $\mathcal X_1(U) \rightrightarrows \mathcal X_0(U)$, where the maps are the face maps coming from the simplicial data of $\mathcal X$.  We will denote by $\pi_0^s(\mathcal X)$ the Nisnevich sheafification of the presheaf $\pi_0(\mathcal X)$.

\begin{definition}
\label{definition A1-chain connected components}
The sheaf of \emph{$\mathbb A^1$-chain connected components} of a space $\mathcal X$ is defined to be $$\mathcal S(\mathcal X) := \pi_0^s({\rm Sing_*^{\mathbb A^1}} \mathcal X).$$
Thus, $\mathcal S(\mathcal X)$ is the Nisnevich sheafification of the presheaf that associates with any smooth scheme $U$ the set $\mathcal X(U)/\sim$, where $\sim$ is the equivalence relation generated by the image of $\mathcal X_1 \to \mathcal X_0 \times \mathcal X_0$, where the maps are the face maps coming from the simplicial data of $\mathcal X$ (in other words, $\sim$ is the equivalence relation generated by naive $\mathbb A^1$-homotopies).
\end{definition}

\begin{definition}
\label{definition A1-connected components}
The sheaf of \emph{$\mathbb A^1$-connected components} of a space $\mathcal X$ is defined to be $$\pi_0^{\mathbb A^1}(\mathcal X) := \pi_0^s({L_{\mathbb A^1}} \mathcal X),$$ where ${L_{\mathbb A^1}}$ denotes an $\mathbb A^1$-fibrant replacement functor.  A space $\mathcal X$ is said to be \emph{$\mathbb A^1$-connected} if $\pi_0^{\mathbb A^1}(\mathcal X) \simeq \ast$.
\end{definition}

Morel-Voevodsky explicitly describe an $\mathbb A^1$-fibrant replacement functor as follows:
\[
L_{\mathbb A^1} = Ex \circ (Ex \circ {\rm Sing}_*^{\mathbb A^1})^{\mathbb N} \circ Ex,
\]
\noindent where $Ex$ denotes a simplicial fibrant replacement functor on the model category of simplicial Nisnevich sheaves of sets over $Sm/k$ \cite[\textsection 2, Lemma 2.6, p. 107]{Morel-Voevodsky}.  There exists a natural transformation $Id \to L_{\mathbb A^1}$ which factors through the natural transformation $Id \to {\rm Sing}_*^{\mathbb A^1}$ mentioned above. For any object $\mathcal X$, the morphism $\mathcal X \to L_{\mathbb A^1}(\mathcal X)$ is an $\mathbb A^1$-weak equivalence.  A result of Morel-Voevodsky \cite[\textsection 2, Corollary 3.22]{Morel-Voevodsky} describes what happens to the natural map ${\rm Sing}_*^{\mathbb A^1} \mathcal X \to L_{\mathbb A^1}(\mathcal X)$ after applying $\pi_0^s$; we record it below for the sake of convenience.

\begin{lemma}
\label{lemma pi0 surjection}
The canonical map $\mathcal S(\mathcal X) \to \pi_0^{\mathbb A^1}(\mathcal X)$ is an epimorphism, for every space $\mathcal X$.  If ${\rm Sing}_*^{\mathbb A^1} \mathcal X$ is $\mathbb A^1$-local, then the map $\mathcal S(\mathcal X) \to \pi_0^{\mathbb A^1}(\mathcal X)$ is an isomorphism.
\end{lemma}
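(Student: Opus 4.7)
The plan is to track the functor $\pi_0^s$ through the explicit formula
\[
L_{\mathbb A^1} = Ex \circ (Ex \circ {\rm Sing}_*^{\mathbb A^1})^{\mathbb N} \circ Ex
\]
recorded just above the statement, using the canonical factorisation $\mathcal X \to {\rm Sing}_*^{\mathbb A^1} \mathcal X \to L_{\mathbb A^1} \mathcal X$ of the unit (also noted in the excerpt).  Two elementary facts will carry most of the weight.  First, $\pi_0^s$ is invariant under simplicial weak equivalences, so in particular the simplicial fibrant replacement $Ex$ induces an isomorphism on $\pi_0^s$ at every stage.  Second, $\pi_0^s$ commutes with filtered colimits of simplicial sheaves, since both the level-zero coequalizer and Nisnevich sheafification do.

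For the epimorphism statement, the key observation is that for any simplicial sheaf $\mathcal Y$ the unit $\mathcal Y \to {\rm Sing}_*^{\mathbb A^1} \mathcal Y$ induces an epimorphism
\[
\pi_0^s(\mathcal Y) \twoheadrightarrow \pi_0^s({\rm Sing}_*^{\mathbb A^1} \mathcal Y).
\]
Indeed, the two presheaves whose sheafifications are compared have identical level-zero sections (both equal to $\mathcal Y_0$), while the face maps on the right, coming from $\underline{\rm Hom}(\mathbb A^1, \mathcal Y_1) \rightrightarrows \mathcal Y_0$, include those on the left (via constant $\mathbb A^1$-families); exactness of sheafification preserves this surjectivity.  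Applied at each stage, this shows that every factor appearing in the iterative construction of $L_{\mathbb A^1} \mathcal X$ starting from ${\rm Sing}_*^{\mathbb A^1} \mathcal X$ either fixes $\pi_0^s$ (the $Ex$ factors) or surjects on $\pi_0^s$ (the additional ${\rm Sing}_*^{\mathbb A^1}$ factors).  Passing to the filtered colimit yields the desired epimorphism $\mathcal S(\mathcal X) \twoheadrightarrow \pi_0^{\mathbb A^1}(\mathcal X)$.

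For the second assertion, suppose that ${\rm Sing}_*^{\mathbb A^1} \mathcal X$ is $\mathbb A^1$-local.  Since both $\mathcal X \to {\rm Sing}_*^{\mathbb A^1} \mathcal X$ and $\mathcal X \to L_{\mathbb A^1} \mathcal X$ are $\mathbb A^1$-weak equivalences, the two-out-of-three property shows that the induced map ${\rm Sing}_*^{\mathbb A^1} \mathcal X \to L_{\mathbb A^1} \mathcal X$ is an $\mathbb A^1$-weak equivalence between $\mathbb A^1$-local objects; by the standard Bousfield-localisation principle, every such map is already a simplicial weak equivalence.  Invariance of $\pi_0^s$ under simplicial weak equivalences then upgrades the epimorphism to the claimed isomorphism.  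The main point where care is needed is the epimorphism step: one must reconcile the presheaf-level surjectivity claim with Nisnevich sheafification and verify that the tower defining $(Ex \circ {\rm Sing}_*^{\mathbb A^1})^{\mathbb N}$ really is a filtered colimit so that $\pi_0^s$ commutes with it.  Both reductions are formal and rely only on standard properties of the Morel--Voevodsky construction, so no serious combinatorial content intervenes.
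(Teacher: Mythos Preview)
The paper does not give its own proof of this lemma; it merely records the statement and cites Morel--Voevodsky \cite[\textsection 2, Corollary 3.22]{Morel-Voevodsky}.  Your argument is essentially the standard one behind that citation and is correct.

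One small imprecision: the tower defining $L_{\mathbb A^1}\mathcal X$ literally begins at $Ex\,\mathcal X$, not at ${\rm Sing}_*^{\mathbb A^1}\mathcal X$, so the phrase ``starting from ${\rm Sing}_*^{\mathbb A^1}\mathcal X$'' is not quite right, and the map ${\rm Sing}_*^{\mathbb A^1}\mathcal X \to {\rm Sing}_*^{\mathbb A^1}(Ex\,\mathcal X)$ that would insert $\mathcal S(\mathcal X)$ into the tower requires a separate justification.  The clean fix is to observe that your tower argument already shows the composite $\pi_0^s(\mathcal X)\to\pi_0^{\mathbb A^1}(\mathcal X)$ is an epimorphism; since this factors as $\pi_0^s(\mathcal X)\twoheadrightarrow\mathcal S(\mathcal X)\to\pi_0^{\mathbb A^1}(\mathcal X)$, the second arrow is an epimorphism as well.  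With that adjustment the proof is complete.
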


We will henceforth focus on a specific class of spaces, namely, sheaves of sets on the big Nisnevich site on $Sm/k$.  We will eventually specialize to the case of schemes.  Let $\mathcal F$ be a Nisnevich sheaf on $Sm/k$.  By Lemma \ref{lemma pi0 surjection}, we have a sequence of epimorphisms
\[
\mathcal F \to \mathcal S(\mathcal F) \to \mathcal S^2(\mathcal F) \to \cdots \to \mathcal S^n(\mathcal F) \to \cdots,
\]
where $\mathcal S^{n+1}(\mathcal F)$ is defined inductively to be $\mathcal S(\mathcal S^{n}(\mathcal F))$, for every $n \in \mathbb N$.  We define
\begin{equation}
L(\mathcal F): = \varinjlim_n ~ \mathcal S^n (\mathcal F).
\end{equation}

The following result was proved in \cite{Balwe-Hogadi-Sawant} (see \cite[Theorem 2.13, Remark 2.15, Corollary 2.18]{Balwe-Hogadi-Sawant}), which shows that $L(\mathcal F)$ is the \emph{universal $\mathbb A^1$-invariant quotient} of $\mathcal F$.

\begin{theorem}
\label{theorem advances}
Let $\mathcal F$ be a sheaf of sets on $Sm/k$.  Then the sheaf $L(\mathcal F)$ is $\mathbb A^1$-invariant.  Moreover, if $\mathcal G$ is an $\mathbb A^1$-invariant sheaf, then any map $\mathcal F \to \mathcal G$ factors uniquely through the epimorphism $\mathcal F \to L(\mathcal F)$.  Moreover, if $\pi_0^{\mathbb A^1}(\mathcal F)$ is $\mathbb A^1$-invariant, then the canonical map $L(\mathcal F) \to \pi_0^{\mathbb A^1}(\mathcal F)$ is an isomorphism.
\end{theorem}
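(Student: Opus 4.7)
The plan is to verify the three claims in order, with the $\mathbb A^1$-invariance of $L(\mathcal F)$ being the main technical heart of the theorem. I would begin by establishing a presheaf-level analogue: the quotient presheaf $V \mapsto \mathcal F(V)/\sim$, where $\sim$ is the equivalence relation generated by naive $\mathbb A^1$-homotopies, is $\mathbb A^1$-invariant as a presheaf. The key geometric input is the multiplication map $m: U \times \mathbb A^1 \times \mathbb A^1 \to U \times \mathbb A^1$, $(u, s, t) \mapsto (u, st)$, which composed with any $\alpha \in \mathcal F(U \times \mathbb A^1)$ yields an explicit naive $\mathbb A^1$-homotopy from $\alpha$ to $p^{*} s_0^{*} \alpha$ (where $p$ is the projection and $s_0$ the zero section), giving surjectivity of $p^{*}$; injectivity then follows by pulling back a chain of homotopies along $s_0$.

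The difficulty is that Nisnevich sheafification can introduce new local identifications not visible at the presheaf level, potentially destroying $\mathbb A^1$-invariance of the single-step quotient $\mathcal S(\mathcal F)$. The heart of the argument is to show that the filtered colimit $L(\mathcal F)$ repairs this defect: every new Nisnevich-local identification created at stage $\mathcal S^n(\mathcal F)$ is absorbed into a naive $\mathbb A^1$-homotopy one stage later, so that the presheaf-level argument above carries through in the colimit. I expect this interplay between iterated sheafification and naive $\mathbb A^1$-homotopy to be the main obstacle, and indeed the whole reason the functor must be iterated.

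For the universal property, the essential observation is that any $\mathbb A^1$-invariant sheaf $\mathcal G$ satisfies $\mathcal S(\mathcal G) = \mathcal G$: a naive $\mathbb A^1$-homotopy between two sections of $\mathcal G$ over $V$ is a morphism $V \times \mathbb A^1 \to \mathcal G$, which by $\mathbb A^1$-invariance factors through $V$, forcing its endpoints to agree. Given any morphism $\mathcal F \to \mathcal G$ with $\mathcal G$ $\mathbb A^1$-invariant, functoriality of $\mathcal S^n$ combined with $\mathcal S^n(\mathcal G) = \mathcal G$ produces a compatible family of morphisms $\mathcal S^n(\mathcal F) \to \mathcal G$ which passes to the colimit to furnish the desired factorization through $L(\mathcal F)$. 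Uniqueness follows because $\mathcal F \to L(\mathcal F)$ is an epimorphism, being a filtered colimit of the epimorphisms $\mathcal F \to \mathcal S^n(\mathcal F)$.

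For the final assertion, I would observe that $\pi_0^{\mathbb A^1}(\mathcal F)$ enjoys an analogous universal property: any morphism $\mathcal F \to \mathcal G$ with $\mathcal G$ an $\mathbb A^1$-invariant sheaf of sets (which is automatically $\mathbb A^1$-local) induces, after $\mathbb A^1$-fibrant replacement, a morphism $L_{\mathbb A^1}\mathcal F \to L_{\mathbb A^1}\mathcal G = \mathcal G$, and applying $\pi_0^s$ yields a factorization through $\pi_0^{\mathbb A^1}(\mathcal F)$, unique because $\mathcal F \to \pi_0^{\mathbb A^1}(\mathcal F)$ is an epimorphism by Lemma~\ref{lemma pi0 surjection}. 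Under the hypothesis that $\pi_0^{\mathbb A^1}(\mathcal F)$ is itself $\mathbb A^1$-invariant, both $L(\mathcal F)$ and $\pi_0^{\mathbb A^1}(\mathcal F)$ are therefore initial among $\mathbb A^1$-invariant sheaves under $\mathcal F$, and the canonical map $L(\mathcal F) \to \pi_0^{\mathbb A^1}(\mathcal F)$---constructed as the colimit of the maps $\mathcal S^n(\mathcal F) \to \pi_0^{\mathbb A^1}(\mathcal F)$ obtained by iterating Lemma~\ref{lemma pi0 surjection} using $\mathcal S(\pi_0^{\mathbb A^1}(\mathcal F)) = \pi_0^{\mathbb A^1}(\mathcal F)$---is the canonical isomorphism between them.
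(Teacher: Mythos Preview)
The paper does not actually prove this theorem: it is quoted verbatim from \cite{Balwe-Hogadi-Sawant} (Theorem~2.13, Remark~2.15, Corollary~2.18 there), with no argument given in the present paper. There is therefore no in-paper proof to compare your proposal against.

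Assessing your proposal on its own terms: your treatments of the universal property and of the final isomorphism are correct and are the standard arguments. The gap is in the first and hardest part, the $\mathbb A^1$-invariance of $L(\mathcal F)$. You correctly establish the presheaf-level statement via the multiplication homotopy $m:(u,s,t)\mapsto(u,st)$, and you correctly diagnose that Nisnevich sheafification can spoil $\mathbb A^1$-invariance of a single $\mathcal S(\mathcal F)$. But you then only \emph{assert} that iteration repairs this (``every new Nisnevich-local identification created at stage $\mathcal S^n(\mathcal F)$ is absorbed into a naive $\mathbb A^1$-homotopy one stage later'') and explicitly flag it as ``the main obstacle'' without carrying it out. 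That is precisely the content of the theorem. Concretely, a section of $\mathcal S^n(\mathcal F)(U\times\mathbb A^1)$ is represented only after passing to a Nisnevich cover $V\to U\times\mathbb A^1$, and the multiplication homotopy you wrote down lives over $V\times\mathbb A^1$; one must argue why the resulting identification descends to yield, in $\mathcal S^{n+1}(\mathcal F)$ or further along the tower, a section pulled back from $U$. Supplying that descent argument is exactly the work done in \cite{Balwe-Hogadi-Sawant}, and your proposal does not contain it.
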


We will henceforth focus on schemes over a field.  In view of Theorem \ref{theorem advances}, it is clear that a good understanding of $L(X)$ is tantamount to understanding $\pi_0^{\mathbb A^1}(X)$.  It is natural to ask the following question.

\begin{question}
\label{question stabilization}
Let $X$ be a smooth scheme over $k$.  Does there exist $n \in \mathbb N$ such that $L(X) \simeq \mathcal S^n(X)$?
\end{question}

For every scheme $X$ over a field $k$, we have the following commutative diagram in which every morphism is an epimorphism
\[
\begin{xymatrix}{
X \ar[r] \ar[rrd] & \mathcal S(X) \ar[r] \ar[rd]& \mathcal S^2(X) \ar[r]& \cdots \ar[r] & L(X) \\
&&\pi_0^{\mathbb A^1}(X) \ar[rru]&&
}\end{xymatrix}
\]
\noindent where the existence of the map $\pi_0^{\mathbb A^1}(X) \to L(X)$ making the diagram commute is a consequence of the $\mathbb A^1$-invariance of $L(X)$ (see \cite[Lemma 2.8]{Balwe-Hogadi-Sawant}).  The morphism $\pi_0^{\mathbb A^1}(X) \to L(X)$ is an isomorphism if $\pi_0^{\mathbb A^1}(
X)$ is $\mathbb A^1$-invariant.  An affirmative answer to Question \ref{question stabilization} will give a conjectural but explicit geometric description of $\pi_0^{\mathbb A^1}(X)$.  We end this section by enlisting the known examples in which such an explicit description is available.

\begin{examples}[$\mathbb A^1$-rigid varieties]
\label{examples A1-connectedness - rigid}
For an $\mathbb A^1$-rigid variety $X$, one has isomorphisms of Nisnevich sheaves $$X \simeq \mathcal S(X) \simeq \pi_0^{\mathbb A^1}(X).$$  Examples of $\mathbb A^1$-rigid varieties include $\mathbb G_m$, algebraic tori, abelian varieties, curves of genus $\geq 1$ etc.
\end{examples}

\begin{examples}[Reductive algebraic groups]
\label{examples A1-connectedness - reductive}
For any sheaf of groups $G$, it is known that $\pi_0^{\mathbb A^1}(G)$ is $\mathbb A^1$-invariant \cite{Choudhury}.  We therefore have $\pi_0^{\mathbb A^1}(G) \simeq L(G) = \varinjlim_n \mathcal S^n(G)$.  We will now focus on the case where $G$ is a reductive algebraic group over a field.
\begin{enumerate}[label=(\alph*)]
\item {\bf Isotropic groups.} Suppose that $G$ satisfies the following \emph{isotropy condition}: every almost $k$-simple component of the derived group $G_{\rm der}$ of $G$ contains a $k$-subgroup scheme isomorphic to $\mathbb G_m$.  Under this hypothesis, Asok, Hoyois and Wendt have shown that ${\rm Sing}_*^{\mathbb A^1}G$ is $\mathbb A^1$-local \cite[Theorem 2.3.2]{Asok-Hoyois-Wendt-2}.  Therefore, for $G$ satisfying the above isotropy condition, one has $$\mathcal S(G) \simeq \pi_0^{\mathbb A^1}(G) \simeq L(G).$$  The sections of this sheaf over fields can often be described explicitly.  If $G$ is a semisimple, simply connected group over an infinite field $k$ satisfying the above isotropy hypothesis, then one has isomorphisms $W(k, G) \simeq \mathcal S(G)(k) \simeq G(k)/R$.  This is a consequence of \cite[Theorem 2.3.2]{Asok-Hoyois-Wendt-2} and a classical result \cite[Th\'eor\`eme 7.2]{Gille}.  Here $W(k,G)$ denotes the \emph{Whitehead group} of $G$ and $G(k)/R$ denotes the group of \emph{R-equivalence classes} (see \cite{Gille}, for example, for precise definitions).  

\item {\bf Anisotropic groups.} Let us assume that the base field $k$ is infinite and perfect.  Suppose now that $G$ does not satisfy the above isotropy hypothesis; that means, the derived group $G_{\rm der}$ of $G$ has at least one almost $k$-simple factor which is anisotropic.  In this case, it is known that ${\rm Sing}_*^{\mathbb A^1}G$ fails to be $\mathbb A^1$-local \cite[Theorem 4.7]{Balwe-Sawant-reductive}.  We now assume that $G$ is a semisimple anisotropic group.  Note that one has $W(k, G) = G(k)$ in this case.  A result of Borel-Tits implies in this case that $\mathcal S(G)(k) \simeq G(k)$ (see \cite[Lemma 3.7]{Balwe-Sawant-IMRN} for details).  However, one has the following result (see \cite[Theorem 4.2]{Balwe-Sawant-IMRN}, \cite[Theorem 3.6]{Balwe-Sawant-reductive}): if $G$ is a semisimple, simply connected group over an infinite perfect field $k$ which does not satisfy the above isotropy hypothesis, then one has canonical isomorphisms $$\pi_0^{\mathbb A^1}(G)(k) \simeq \mathcal S^2(G)(k) \simeq G(k)/R.$$  It is worthwhile to mention here that we do not yet know whether $\pi_0^{\mathbb A^1}(G)$ agrees with $\mathcal S^2(G)$ as a sheaf.

\item {\bf $\mathbb A^1$-connected reductive algebraic groups.}  Recall that a space $\mathcal X$ is said to be $\mathbb A^1$-connected if $\pi_0^{\mathbb A^1}(\mathcal X) \simeq \ast$.  In \cite[Theorem 5.2]{Balwe-Sawant-reductive}, $\mathbb A^1$-connected reductive algebraic groups have been characterized: a reductive algebraic group $G$ over an infinite perfect field $k$ is $\mathbb A^1$-connected if and only if $G$ is semisimple, simply connected and $R$-trivial (that is $G(F)/R$ is trivial for every finitely generated separable field extension $F$ of $k$).
\end{enumerate}
\end{examples}

\begin{examples}[Proper varieties]
\label{examples A1-connectedness - proper}
\begin{enumerate}[label=(\alph*)]
\item If $X$ is a proper variety over $k$ and if $F$ is a finitely generated field extension of $k$, then one has $\mathcal S(X)(F) \simeq \pi_0^{\mathbb A^1}(X)(F)$ (see \cite[Theorem 2.4.3]{Asok-Morel}).  One also has $\mathcal S(X)(F) \simeq \mathcal S^n(X)(F)$, for every $n$ (see \cite[Theorem 3.9, Corollary 3.10]{Balwe-Hogadi-Sawant}).

\item The case of proper schemes of dimension $\leq 1$ is very easy.  For reduced, proper (possibly singular) schemes $X$ over $k$ of dimension $\leq 1$, one always has $\mathcal S(X) \simeq \mathcal S^2(X)$ (\cite[Proposition 3.13]{Balwe-Hogadi-Sawant}).  Consequently, $\mathcal S(X) \simeq \pi_0^{\mathbb A^1}(X)$.

\item If $X$ is a proper, non-uniruled surface over $k$, then one has $\mathcal S(X) \simeq \pi_0^{\mathbb A^1}(X) \simeq \mathcal S^2(X)$ (\cite[Theorem 3.14]{Balwe-Hogadi-Sawant}). 

\item The case of smooth projective ruled surfaces is surprisingly very complicated.  If $X$ is a smooth proper rational surface, then one has $\mathcal S^2(X) = \ast$ (see Corollary \ref{corollary R-trivial}).  However, one has $\pi_0^{\mathbb A^1}(X) = \ast$ as well.  We do not yet know if the sheaf $\mathcal S(X)$ for a rational surface $X$ is trivial.

Let us now assume that the characteristic of $k$ is $0$.  The case of ruled surfaces whose minimal model is of the form $\mathbb P^1 \times C$, where $C$ is a smooth projective curve of genus $\geq 1$ (note that such a $C$ is $\mathbb A^1$-rigid) is the most complicated one.  If $E$ is a $\mathbb P^1$-bundle over $C$, then one has $\mathcal S(E) \simeq \pi_0^{\mathbb A^1}(E) \simeq C$.  If $X$ is the surface obtained by blowing up one closed point on $E$ and when $k$ is assumed to be algebraically closed, one has $\mathcal S(X) \neq \mathcal S^2(X)$.  However, in this case one has $\mathcal S^2(X) \simeq \mathcal S^3(X)$.  The details will appear in a forthcoming paper \cite{Balwe-Sawant-ruled}. 

\end{enumerate}
\end{examples}

\section{Naive $\mathbb A^1$-connectedness on field-valued points}
\label{section main result}

Let $X$ be a scheme over a field $k$.  It is often much simpler to determine sections of the sheaf $\mathcal S(X)$ on smooth schemes which are the spectrum of a finitely generated separable field extension of the base field $k$.  A result of Morel (see \cite[Lemma 6.1.3]{Morel-connectivity}) states that a space $\mathcal X$ over an infinite field $k$ is $\mathbb A^1$-connected (that is, $\pi_0^{\mathbb A^1}(\mathcal X)$ is trivial) if and only if $\pi_0^{\mathbb A^1}(\mathcal X)({\rm Spec}~F) = \ast$, for every finitely generated separable field extension $F$ of $k$.  The argument given by Morel also works when the base field is finite, thanks to Gabber's presentation lemma over finite fields proved in \cite{Hogadi-Kulkarni}.  We wish to study the analogue of this result in the context of the sheaf of $\mathbb A^1$-chain connected components.  The method used here closely follows the one employed by Morel in \cite[Section 6.1]{Morel-connectivity} and in \cite[Section 3.3]{Morel-ICTP}.

\begin{lemma}
\label{lemma chain connected}
Let $V$ be an irreducible smooth scheme over $k$ and let $W \hookrightarrow V$ be the inclusion of a dense open subscheme.  Then $\mathcal S(V/W) \simeq \ast$. 
\end{lemma}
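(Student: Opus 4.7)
The plan is to show that any section of $V/W$ over a smooth scheme $U$ is, after Nisnevich refinement, naively $\mathbb A^1$-homotopic to the basepoint $\ast$; by the definition of $\mathcal S$, this will yield $\mathcal S(V/W) \simeq \ast$. Using the pushout description of $V/W$, any such section is Nisnevich-locally represented by a morphism $f \colon U \to V$, so it suffices to produce, for each $u \in U$, a Nisnevich neighborhood $\widetilde U \to U$ of $u$ on which $f|_{\widetilde U}$ is naively $\mathbb A^1$-homotopic to $\ast$ in $V/W$.

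When $f(u) \in W$, I would take $\widetilde U := f^{-1}(W)$, which is a Zariski open neighborhood of $u$ on which $f$ factors through $W$, so the constant homotopy suffices. The nontrivial case is $f(u) \in Z := V \setminus W$, where I plan to build the required homotopy $H \colon \widetilde U \times \mathbb A^1 \to V/W$ by Nisnevich gluing: on the open piece $\widetilde U \times (\mathbb A^1 \setminus \{0\})$ take $H = \ast$; and on a Henselian neighborhood $\widetilde U \times \mathrm{Hens}_0$ of the zero fiber, define $H$ as the composite of the quotient $V \to V/W$ with a morphism $G \colon \widetilde U \times \mathrm{Hens}_0 \to V$ such that $G|_{t = 0} = f \circ \mathrm{pr}_1$ and $G$ sends the punctured neighborhood $\widetilde U \times (\mathrm{Hens}_0 \setminus \{0\})$ into $W$. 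The two pieces then automatically agree on their overlap (both reducing to $\ast$ in $V/W$), producing a well-defined section $H$ with $H|_{t = 0} = f|_{\widetilde U}$ and $H|_{t = 1} = \ast$.

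To construct $G$, I would apply Gabber's presentation lemma (in its form valid over an arbitrary base field via \cite{Hogadi-Kulkarni}) at the point $v := f(u)$. This produces a Nisnevich neighborhood $V' \to V$ of $v$ together with an étale morphism $\pi \colon V' \to \mathbb A^{n-1}_k \times \mathbb A^1$ such that $\pi|_{Z \cap V'}$ is finite onto its image in $\mathbb A^{n-1}_k$. Setting $\widetilde U := U \times_V V'$ and composing $\widetilde U \to V' \xrightarrow{\pi} \mathbb A^{n-1} \times \mathbb A^1$ with the linear translation $(y, s, t) \mapsto (y, s + t)$ yields a morphism $\widetilde U \times \mathbb A^1 \to \mathbb A^{n-1} \times \mathbb A^1$ whose value at $t = 0$ recovers $\pi \circ f|_{\widetilde U}$ and whose generic $t$-fiber lies outside $\pi(Z \cap V')$, precisely because the latter has finite fibers over $\mathbb A^{n-1}$. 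Restricting the deformation parameter to $\mathrm{Hens}_0$ and lifting along the étale map $\pi$ using the Henselian property along $\widetilde U \times \{0\}$ then produces the desired $G$, possibly after further Nisnevich shrinking of $\widetilde U$.

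The main obstacle will be carrying out the étale lift in a Nisnevich-compatible way so that $G$ simultaneously matches $f$ on the closed fiber and maps the generic fiber into $W$. Gabber's presentation lemma, with its Hogadi-Kulkarni extension covering finite $k$, is calibrated for precisely this transversality-type requirement. The overall approach parallels Morel's strategy for the $\pi_0^{\mathbb A^1}$-analogue in \cite[Section 6.1]{Morel-connectivity} and \cite[Section 3.3]{Morel-ICTP}, as indicated in the paper.
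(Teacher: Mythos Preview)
Your plan is in the right spirit, but the construction of $G$ has a genuine gap. You assert that the translation $(y,s,t)\mapsto (y,s+t)$, lifted along $\pi$, yields a $G$ sending the punctured neighbourhood $\widetilde U\times(\mathrm{Hens}_0\setminus\{0\})$ into $W$. It does not: finiteness of $\pi(Z\cap V')$ over $\mathbb A^{n-1}$ only tells you that for each \emph{fixed} point of $\widetilde U$ the set of bad parameter values $t$ is finite, not that this set reduces to $\{0\}$. Concretely, take $V=\mathbb A^1$, $W=\mathbb G_m$, $Z=\{0\}$, and $f\colon U=\mathbb A^1\to V$ the identity, with $u=0$. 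Here Gabber's lemma is vacuous ($V'=V$, $\pi=\mathrm{id}$, $n=1$) and your deformation is $(u,t)\mapsto u+t$; it hits $Z$ along the line $t=-u$. Over the punctured Henselization, with fraction field $K$, the resulting map $\widetilde U_K=\mathbb A^1_K\to V$ still meets $Z$ at the $K$-point $u=-t$, so your two local definitions of $H$ disagree on the overlap and the Nisnevich gluing fails. A secondary issue is that $(\widetilde U\times\mathrm{Hens}_0,\,\widetilde U\times\{0\})$ is not a Henselian pair when $\dim\widetilde U>0$, so the \'etale lift producing $G$ is not automatic either; this could be repaired by a spreading-out argument, but the first obstruction cannot.

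The paper sidesteps both problems by working Zariski-locally on $V$ itself (rather than on an arbitrary test scheme) and by using the full output of Gabber's lemma, in particular the condition $Z\cap V'=\pi^{-1}(\pi(Z\cap V'))$, to obtain a Nisnevich-excision isomorphism with $\mathbb A^1_{S}/(\mathbb A^1_{S}\setminus\pi(Z\cap V'))$ for a suitable base $S$. One then compactifies the $\mathbb A^1$-direction to $\mathbb P^1_{S}$ and uses that the zero section is $\mathbb A^1$-homotopic to the section at infinity, which lies \emph{entirely} outside $\pi(Z\cap V')\subset\mathbb A^1_{S}$; the ``generic $t$ versus all $t$'' problem therefore never arises. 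Surjectivity onto $\mathcal S$ is then deduced from ${\rm Sing}_*^{\mathbb A^1}(\mathbb A^1_{S})\simeq {\rm Sing}_*^{\mathbb A^1}(S)$, not from a single explicit homotopy covering all sections at once.
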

\begin{proof}
Since we have epimorphisms $V \to V/W \to \mathcal S(V/W)$, triviality of $\mathcal S(V/W)$ follows from the following statement: any point $x \in V$ has an open neighbourhood $U$ such that $\mathcal S(U/(W \cap U))$ is trivial.  

Let $Z \hookrightarrow V$ be the closed immersion of the complement of $W$, with the reduced induced subscheme structure.  By Gabber's presentation lemma (see \cite[Theorem 3.1.1]{CTHK} for the case where $k$ is infinite and \cite[Theorem 1.1]{Hogadi-Kulkarni} for the case where $k$ is finite), $x$ admits an open neighbourhood $U$ and an \'etale morphism $\pi: U \to \mathbb A^1_{V'}$, for some open subscheme $V'$ of $\mathbb A^{d-1}$, where $d$ is the dimension of $V$ at $x$, such that $\pi$ induces a closed immersion $Z \cap U \hookrightarrow \mathbb A^1_{V'}$ satisfying $Z \cap U = \pi^{-1}(\pi(Z \cap U))$ and such that $Z \cap U \to V'$ is a finite morphism.  Therefore, we have an isomorphism of Nisnevich sheaves
\[
U/ (U-Z \cap U) \xrightarrow{\sim} \mathbb A^1_{V'}/(\mathbb A^1_{V'}-\pi(Z \cap U)).
\]
\noindent Hence, it suffices to check that $\mathcal S(\mathbb A^1_{V'}/(\mathbb A^1_{V'}-\pi(Z \cap U)))$ is trivial.  Now, since $Z \cap U \to V'$ is a finite morphism, $Z \cap U \to \mathbb P^1_{V'}$ is proper.  This closed immersion does not intersect the section at infinity $s_{\infty}: V' \to \mathbb P^1_{V'}$.  By Mayer-Vietoris excision (see \cite[\textsection 3, Lemma 1.6]{Morel-Voevodsky}), we have an isomorphism of Nisnevich sheaves
\[
\mathbb A^1_{V'}/(\mathbb A^1_{V'}-\pi(Z \cap U)) \xrightarrow{\sim} \mathbb P^1_{V'}/(\mathbb P^1_{V'}-\pi(Z \cap U)).
\]
Also observe that $\mathbb A^1_{V'} \to \mathbb P^1_{V'}/(\mathbb P^1_{V'}-\pi(Z \cap U))$ is onto and that ${\rm Sing}_*^{\mathbb A^1}(\mathbb A^1_{V'}) \simeq {\rm Sing}_*^{\mathbb A^1}(V')$ (since ${\rm Sing}_*^{\mathbb A^1}$ preserves $\mathbb A^1$-weak equivalences).  Thus, the composition
\[
V' \to \mathbb A^1_{V'} \to \pi_0^{s}({\rm Sing}_*^{\mathbb A^1}(\mathbb A^1_{V'}/(\mathbb A^1_{V'}-\pi(Z \cap U)))) \to \pi_0^{s}({\rm Sing}_*^{\mathbb A^1}(\mathbb P^1_{V'}/(\mathbb P^1_{V'}-\pi(Z \cap U)))) 
\]
\noindent is surjective for any section $V' \to \mathbb A^1_{V'}$; in particular, for the zero section.  But, in $\mathbb P^1_{V'}$, the zero section is $\mathbb A^1$-homotopic to the section at infinity $s_{\infty}$.  Since $s_{\infty}(V') \subseteq \mathbb P^1_{V'}-\pi(Z \cap U)$, it follows that
\[
V \to \pi_0^{s}({\rm Sing}_*^{\mathbb A^1}(\mathbb P^1_{V'}/(\mathbb P^1_{V'}-\pi(Z \cap U)))) = \mathcal S (\mathbb P^1_{V'}/(\mathbb P^1_{V'}-\pi(Z \cap U)))
\]
is the trivial morphism, as desired. 
\end{proof}

\begin{theorem} 
\label{theorem chain connected}
Let $k$ be a field and let $\mathcal X$ be a simplicial sheaf of sets on $Sm/k$.  Suppose that $\mathcal S(\mathcal X)({\rm Spec}~ F) = \ast$, for every finitely generated separable field extension $F$ of $k$.  Then $\mathcal S(Ex{\rm Sing}_*^{\mathbb A^1}\mathcal X) \simeq \ast$.  Consequently, $\mathcal S^2(\mathcal X) \simeq \ast$.
\end{theorem}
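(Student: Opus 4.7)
Set $\mathcal Y := Ex{\rm Sing}_*^{\mathbb A^1}\mathcal X$. Since $Ex$ is a simplicial weak equivalence, $\pi_0^s(\mathcal Y) \simeq \pi_0^s({\rm Sing}_*^{\mathbb A^1}\mathcal X) = \mathcal S(\mathcal X)$, so the hypothesis translates to $\pi_0^s(\mathcal Y)({\rm Spec}~F) = \ast$ for every finitely generated separable extension $F/k$, and the main statement becomes $\mathcal S(\mathcal Y) = \pi_0^s({\rm Sing}_*^{\mathbb A^1}\mathcal Y) \simeq \ast$. The consequence $\mathcal S^2(\mathcal X) \simeq \ast$ then follows formally once the main statement is proved: applying the functor $\mathcal S = \pi_0^s \circ {\rm Sing}_*^{\mathbb A^1}$, which preserves epimorphisms of simplicial sheaves, to the canonical epimorphism $\mathcal Y \twoheadrightarrow \pi_0^s(\mathcal Y) \simeq \mathcal S(\mathcal X)$ produces an epimorphism $\mathcal S(\mathcal Y) \twoheadrightarrow \mathcal S(\mathcal S(\mathcal X)) = \mathcal S^2(\mathcal X)$, so triviality of the source forces triviality of the target.

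For the main statement I would follow the strategy of Morel in \cite{Morel-connectivity, Morel-ICTP}, combining the hypothesis at field points with a spreading-out argument and the $\mathbb A^1$-sliding technique from Lemma \ref{lemma chain connected}. Fix $V \in Sm/k$ irreducible with function field $F$ and two sections $\sigma, \tau \in \mathcal Y_0(V)$; it suffices to show that for each $x \in V$ there is a Nisnevich neighborhood $V''$ of $x$ on which $[\sigma] = [\tau]$ in $\mathcal S(\mathcal Y)(V'')$. The hypothesis yields $[\sigma|_F] = [\tau|_F]$ in $\pi_0^s(\mathcal Y)({\rm Spec}~F)$, and the simplicial fibrancy of $\mathcal Y$ (a consequence of the $Ex$-replacement) then provides, after a Nisnevich refinement of ${\rm Spec}~F$, a 1-simplex $H_F \in \mathcal Y_1({\rm Spec}~F)$ with $d_0 H_F = \sigma|_F$ and $d_1 H_F = \tau|_F$. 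Spreading out, there is a dense open $U \subset V$ and a 1-simplex $H \in \mathcal Y_1(U)$ with the same face relations. Pulling $H$ back along the projection $U \times \mathbb A^1 \to U$ and viewing the result in $({\rm Sing}_*^{\mathbb A^1}\mathcal Y)_1(U) = \mathcal Y_1(U \times \mathbb A^1)$ yields a 1-simplex of ${\rm Sing}_*^{\mathbb A^1}\mathcal Y$ witnessing $\sigma|_U \sim \tau|_U$ in $\mathcal S(\mathcal Y)$.

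The main obstacle is extending this equivalence from the dense open $U$ to a Nisnevich neighborhood of a given $x \in V \setminus U$. Here I would apply Gabber's presentation lemma to $Z := V \setminus U$ at $x$, exactly as in the proof of Lemma \ref{lemma chain connected}, to obtain a Nisnevich neighborhood $V''$ of $x$ and an \'etale morphism $\pi : V'' \to \mathbb A^1_{V'}$ such that $Z \cap V''$ maps finitely to $V'$ and satisfies $Z \cap V'' = \pi^{-1}(\pi(Z \cap V''))$. Mayer--Vietoris excision then yields the isomorphisms
\[
V''/(V'' - Z \cap V'') \simeq \mathbb A^1_{V'}/(\mathbb A^1_{V'} - \pi(Z \cap V'')) \simeq \mathbb P^1_{V'}/(\mathbb P^1_{V'} - \pi(Z \cap V'')),
\]
and the $\mathbb A^1$-homotopy on $\mathbb P^1_{V'}$ sliding the zero section to the section at infinity (the latter contained in $\mathbb P^1_{V'} - \pi(Z \cap V'')$) provides, after pullback through $\pi$ and patching via the Mayer--Vietoris square, a 1-simplex $\widetilde H \in \mathcal Y_1(V'' \times \mathbb A^1) = ({\rm Sing}_*^{\mathbb A^1}\mathcal Y)_1(V'')$ whose faces realize $\sigma|_{V''} \sim \tau|_{V''}$ in $\mathcal S(\mathcal Y)$. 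The technical heart lies in this transfer of a purely simplicial homotopy of $\mathcal Y$ on $U$ into an $\mathbb A^1$-chain homotopy of ${\rm Sing}_*^{\mathbb A^1}\mathcal Y$ on $V''$, deploying the $\mathbb A^1$-slide on $\mathbb P^1_{V'}$ of Lemma \ref{lemma chain connected}.
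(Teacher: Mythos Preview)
Your outline is along the right lines (Morel's method, spread out from the function field, invoke Gabber's presentation lemma), but the decisive extension step contains a genuine gap. Having produced a $1$-simplex $H \in \mathcal Y_1(U)$ on a dense open $U \subset V$, you assert that the excision isomorphisms
\[
V''/(V''\setminus Z) \;\simeq\; \mathbb A^1_{V'}/(\mathbb A^1_{V'}\setminus \pi(Z)) \;\simeq\; \mathbb P^1_{V'}/(\mathbb P^1_{V'}\setminus \pi(Z))
\]
together with the $\mathbb A^1$-slide on $\mathbb P^1_{V'}$ ``provide, after pullback through $\pi$ and patching via the Mayer--Vietoris square, a $1$-simplex $\widetilde H \in \mathcal Y_1(V''\times\mathbb A^1)$''. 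But these are isomorphisms of \emph{quotient sheaves of sets}; they give no mechanism for manufacturing sections of an arbitrary simplicial sheaf $\mathcal Y$, and in particular no way to extend $H$ across $Z$ with the prescribed boundaries $\sigma|_{V''}$ and $\tau|_{V''}$. Lemma~\ref{lemma chain connected} tells you that $\mathcal S(V/W)\simeq\ast$; it does not by itself slide homotopies inside $\mathcal Y$.

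The paper closes this gap differently. First, one compares a given section with the \emph{base point} rather than with an arbitrary second section $\tau$; this is essential, because it is what will eventually permit a factorization through the pointed quotient $V/W$. Second---and this is the step you are missing---one uses the simplicial fibrancy of $\mathcal Y = Ex{\rm Sing}_*^{\mathbb A^1}\mathcal X$ not merely to produce $H$ on $U$, but to \emph{extend}: the inclusion $V\times\{1\}\,\cup\,W\times\Delta^1 \hookrightarrow V\times\Delta^1$ is an acyclic cofibration, so the right lifting property yields a simplicial homotopy on all of $V$ from $\phi$ to some $\phi'$ with $\phi'|_W$ the constant map. Now $\phi'$ factors through $V/W$, and Lemma~\ref{lemma chain connected} applies \emph{functorially}: the composite $V \to V/W \to \mathcal Y \to \mathcal S(\mathcal Y)$ factors through $\mathcal S(V/W)\simeq\ast$. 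Thus Gabber's lemma and the $\mathbb A^1$-slide are used only inside the proof of $\mathcal S(V/W)\simeq\ast$, and the passage to $\mathcal Y$ is purely via functoriality of $\mathcal S$ together with the anodyne extension. Your attempt to rerun the Gabber argument directly in $\mathcal Y$ bypasses this factorization and does not go through as written.
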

\begin{proof}
We need to show that for every $U \in Sm/k$, the pointed set $\mathcal S({\rm Sing}_*^{\mathbb A^1} \mathcal X)(U)$ is trivial.  It suffices to show that for every morphism $U \to \mathcal S({\rm Sing}_*^{\mathbb A^1}\mathcal X)$, there is a Nisnevich cover $\mathcal V = \coprod V_i \to U$ such that the composite $V \to U \to \mathcal S({\rm Sing}_*^{\mathbb A^1}\mathcal X)$ is trivial.

\noindent \emph{Claim:} For any irreducible, smooth $k$-scheme $V$ and a morphism $\phi: V \to \mathcal X$, the composition $V \stackrel{\phi}{\to} \mathcal X \to Ex{\rm Sing}_*^{\mathbb A^1} \mathcal X \to \mathcal S(Ex{\rm Sing}_*^{\mathbb A^1} \mathcal X)$ is trivial.

\noindent \emph{Proof of the claim:}  Let $k(V)$ denote the function field of $V$.  Since 
\[
\mathcal S(\mathcal X)({\rm Spec}~ k(V)) = \underset{W \hookrightarrow V \text{nonempty open}}{\varinjlim} \mathcal S(\mathcal X)(W)
\]
\noindent is trivial by hypothesis, there exists a dense open subset $W \hookrightarrow V$ such that the composite $W \to V \stackrel{\phi}{\to} \mathcal X$ is $\mathbb A^1$-chain homotopic to the trivial morphism.  Therefore the composite of this morphism with the morphism $\mathcal X \to {\rm Sing}_*^{\mathbb A^1} \mathcal X$ is simplicially homotopic to the trivial morphism.  Choose a simplicial fibrant replacement ${\rm Sing}_*^{\mathbb A^1} \mathcal X \to Ex {\rm Sing}_*^{\mathbb A^1} \mathcal X$.  The composite $W \hookrightarrow V \stackrel{\phi}{\to} \mathcal X \to {\rm Sing}_*^{\mathbb A^1} \mathcal X \to Ex {\rm Sing}_*^{\mathbb A^1} \mathcal X$ continues to be simplicially homotopic to the trivial map.  We denote this simplicial homotopy by $H: W \times \Delta^1 \to Ex {\rm Sing}_*^{\mathbb A^1} \mathcal X$, where $H |_{W \times \{0\}}$ is the trivial map and $H |_{W \times \{1\}}$ is induced by $\phi|_W$ (here $\Delta^1$ denotes the simplicial $1$-simplex).  Consider the acyclic cofibration $V \times \{1\} ~ \cup ~ W \times \Delta^1 \to V \times \Delta^1$.  The maps $V \times \{1\} \stackrel{\sim}{\to} V \stackrel{\phi}{\to} \mathcal X \to Ex {\rm Sing}_*^{\mathbb A^1} \mathcal X$ and $H: W \times \Delta^1 \to Ex {\rm Sing}_*^{\mathbb A^1} \mathcal X$ clearly glue to give a map $\Phi: V \times \{1\} ~ \cup ~ W \times \Delta^1 \to Ex {\rm Sing}_*^{\mathbb A^1} \mathcal X$, which fits in the following commutative diagram.
\[
\begin{xymatrix}{
V \times \{1\} ~ \cup ~ W \times \Delta^1 \ar[r]^-{\Phi} \ar[d] & Ex {\rm Sing}_*^{\mathbb A^1} \mathcal X \ar[d]\\
V \times \Delta^1 \ar[r] \ar@{-->}[ru] & \ast
}\end{xymatrix}
\]
\noindent We now use the right lifting property of the projection map $Ex {\rm Sing}_*^{\mathbb A^1} \mathcal X \to \ast$ with respect to acyclic cofibrations to see that dotted arrow in the above diagram exists.  It follows that $\phi: V \to Ex {\rm Sing}_*^{\mathbb A^1} \mathcal X$ is simplicially homotopic to a morphism $\phi': V \to Ex {\rm Sing}_*^{\mathbb A^1} \mathcal X$ whose restriction to $W$ is trivial.  Thus, we get an induced morphism (of spaces) $\bar{\phi'}: V/W \to Ex {\rm Sing}_*^{\mathbb A^1} \mathcal X$.  Now, applying Lemma \ref{lemma chain connected} and commutativity of the diagram 
\[
\begin{xymatrix}{
V \ar[d] \ar[rd] &\\
V/W \ar[r] \ar[d] & Ex {\rm Sing}_*^{\mathbb A^1} \mathcal X \ar[d]\\
\mathcal S(V/W) \ar[r]& \mathcal S(Ex {\rm Sing}_*^{\mathbb A^1} \mathcal X)
}\end{xymatrix}
\]
proves the claim.

We now complete the proof of the theorem using the claim.  Since the natural map $Ex{\rm Sing}_*^{\mathbb A^1} \mathcal X \to \mathcal S(Ex{\rm Sing}_*^{\mathbb A^1}\mathcal X)$ is an epimorphism, there is a Nisnevich covering $\coprod V_i \to U$, where $V_i$ are irreducible smooth $k$-schemes such that every composite $V_i \to U \to \mathcal S(Ex{\rm Sing}_*^{\mathbb A^1}\mathcal X)$ lifts to a morphism $V_i \to Ex{\rm Sing}_*^{\mathbb A^1} \mathcal X$.  Since $Ex{\rm Sing}_*^{\mathbb A^1} \mathcal X$ is a simplicial fibrant replacement of $\mathcal X$, each map $V_i \to Ex{\rm Sing}_*^{\mathbb A^1} \mathcal X$ is represented by a map $V_i \to {\rm Sing}_*^{\mathbb A^1} \mathcal X$ in the simplical homotopy category $\mathcal H_s(k)$.
\[
\begin{xymatrix}{
&& \mathcal X \ar[d] \\
&& {\rm Sing}_*^{\mathbb A^1} \mathcal X \ar@{->>}[d] \\
\coprod V_i \ar@{-->}[rruu] \ar[rru] \ar[r] & U \ar[r]  & \mathcal S(Ex{\rm Sing}_*^{\mathbb A^1} \mathcal X)
}\end{xymatrix}
\]
Since the sheaf at simplicial level $0$ of ${\rm Sing}_*^{\mathbb A^1} \mathcal X$ is $\mathcal X_0$, and since any map from a space of simplicial dimension $0$ to another space is determined by a map at the $0$th simplicial level, this map factors through the monomorphism $\mathcal X \to {\rm Sing}_*^{\mathbb A^1} \mathcal X$.  Thus, the theorem now follows from the claim, applied to each of the maps $V_i \to \mathcal X$.
\end{proof}

\begin{corollary}
\label{corollary R-trivial}
Let $X$ be a scheme over a field $k$ such that $\mathcal S(X)({\rm Spec}~F) = \ast$, for every finitely generated separable field extension $F$ of $k$.  Then $\mathcal S^2(X) \simeq \ast$.
\end{corollary}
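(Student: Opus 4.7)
The plan is to derive Corollary \ref{corollary R-trivial} directly as a specialization of Theorem \ref{theorem chain connected}. Via the functor-of-points convention set out at the beginning of Section \ref{section A1-connectedness}, the scheme $X$ is canonically a Nisnevich sheaf of sets on $Sm/k$, and hence a simplicially constant simplicial sheaf of sets. I would therefore take $\mathcal X := X$ in Theorem \ref{theorem chain connected}.

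Under this identification, the hypothesis of the corollary, namely that $\mathcal S(X)({\rm Spec}~F) = \ast$ for every finitely generated separable field extension $F/k$, is literally the hypothesis of Theorem \ref{theorem chain connected} applied to the space $X$. The theorem then delivers $\mathcal S^2(X) \simeq \ast$, which is exactly the assertion to be proved. No further step is required.

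I anticipate no real obstacle: all the substantive input, namely Gabber's presentation lemma packaged into Lemma \ref{lemma chain connected}, the $\mathbb A^1$-homotopy between the zero section and the section at infinity in $\mathbb P^1_{V'}$, and the lifting argument along the acyclic cofibration $V \times \{1\} \cup W \times \Delta^1 \hookrightarrow V \times \Delta^1$ using the simplicial fibrancy of $Ex\,{\rm Sing}_*^{\mathbb A^1}\mathcal X$, has already been carried out in the proof of Theorem \ref{theorem chain connected}. The point of recording this corollary separately is simply to isolate the case of schemes, which is the one that will be used in the geometric applications later in the paper.
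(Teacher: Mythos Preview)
Your proposal is correct and matches the paper's approach exactly: the corollary is stated immediately after Theorem~\ref{theorem chain connected} with no separate proof, since it is just the specialization of that theorem to the simplicially constant space $\mathcal X = X$ associated to the scheme $X$.
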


The condition in Corollary \ref{corollary R-trivial} can be seen to hold when $X$ is a smooth proper rational variety over a field of characteristic $0$.  In general, it holds when $X$ is a smooth proper $R$-trivial variety over a field of characteristic $0$ (see \cite[Theorem 2.4.3, Corollary 2.4.9]{Asok-Morel}).

\begin{corollary}
\label{corollary A1-connected reductive groups}
Let $G$ be an $\mathbb A^1$-connected reductive algebraic group over an infinite perfect field $k$.  Then $\mathcal S^3(G) \simeq \ast$.
\end{corollary}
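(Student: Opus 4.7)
The plan is to apply Theorem \ref{theorem chain connected} with $\mathcal X := \mathcal S(G)$; its conclusion reads $\mathcal S^2(\mathcal S(G)) = \mathcal S^3(G) \simeq \ast$, so it suffices to verify the hypothesis that $\mathcal S^2(G)({\rm Spec}~F) = \ast$ for every finitely generated separable field extension $F/k$.

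By the characterization recalled in Examples \ref{examples A1-connectedness - reductive}(c), $\mathbb A^1$-connectedness of the reductive group $G$ is equivalent to $G$ being semisimple, simply connected and $R$-trivial, hence $G(F)/R = \ast$ for every such $F$. Writing $G = G_1 \times \cdots \times G_r$ as a product of its almost $k$-simple factors and using that $\mathcal S^n$ is compatible with finite products (both ${\rm Sing}_*^{\mathbb A^1}$ and $\pi_0^s$ preserve finite products), it suffices to check $\mathcal S^2(G_i)({\rm Spec}~F) = \ast$ factor by factor. For a factor $G_i$ satisfying the isotropy hypothesis, Examples \ref{examples A1-connectedness - reductive}(a) gives $\mathcal S(G_i) \simeq \pi_0^{\mathbb A^1}(G_i)$, whose $F$-sections form a quotient of $G_i(F)/R = \ast$; thus already $\mathcal S(G_i)(F) = \ast$ and a fortiori $\mathcal S^2(G_i)(F) = \ast$. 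For an anisotropic factor, Examples \ref{examples A1-connectedness - reductive}(b) supplies the identification $\mathcal S^2(G_i)({\rm Spec}~F) \simeq G_i(F)/R = \ast$ directly.

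The principal subtlety, and the main obstacle I would need to address carefully, lies in upgrading the identifications in Examples \ref{examples A1-connectedness - reductive}(b), as recorded there over the base field $k$, to hold over an arbitrary finitely generated separable extension $F/k$. This amounts to verifying that the arguments of \cite[Theorem 4.2]{Balwe-Sawant-IMRN} and \cite[Theorem 3.6]{Balwe-Sawant-reductive} carry over at the level of $F$-points; the hypotheses of infiniteness and perfection survive the passage from $k$ to $F$ (respectively verbatim, and by passing to a perfect closure when required in characteristic $p$), and the isotropy class of each $k$-simple factor is controlled under separable base change. Granting this, Theorem \ref{theorem chain connected} applied to $\mathcal S(G)$ delivers the claimed triviality $\mathcal S^3(G) \simeq \ast$.
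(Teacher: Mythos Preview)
Your approach is essentially the paper's: apply Theorem \ref{theorem chain connected} to $\mathcal X = \mathcal S(G)$, reducing to $\mathcal S^2(G)({\rm Spec}\,F) = \ast$ for all finitely generated separable $F/k$, and invoke Examples \ref{examples A1-connectedness - reductive}(a),(b) together with the $R$-triviality from (c). The paper's proof is the one-line version of exactly this.

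Two remarks on your execution. First, the decomposition into almost $k$-simple factors is unnecessary: parts (a) and (b) of Examples \ref{examples A1-connectedness - reductive} already jointly cover every semisimple simply connected $G$ (the dichotomy there is whether \emph{all} almost simple factors contain a split torus or not), so one may quote them for $G$ itself. Second, and more to the point, your detour through the factor-by-factor base change to $F$ is what creates the difficulty you flag. The identifications in \cite[Theorem 4.2]{Balwe-Sawant-IMRN} and \cite[Theorem 3.6]{Balwe-Sawant-reductive} are in fact proved for sections of $\mathcal S^2(G)$ (a sheaf on $Sm/k$) over all finitely generated separable extensions $F/k$, not merely over $k$; one does not need to regard $F$ as a new base field and re-run the argument there. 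In particular, your suggested fix of ``passing to a perfect closure in characteristic $p$'' is both unnecessary and unsound as stated: the perfect closure of such an $F$ is not finitely generated over $k$, so it lies outside the range where the hypotheses of Theorem \ref{theorem chain connected} are being checked. Once you cite the references for what they actually prove, the subtlety evaporates and the argument is the paper's.
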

\begin{proof}
This is a straightforward consequence of Theorem \ref{theorem chain connected} and Examples \ref{examples A1-connectedness - reductive} (a), (b).
\end{proof}

\begin{remark}
Note that if $G$ in Corollary \ref{corollary A1-connected reductive groups} is such that every almost $k$-simple factor of $G$ contains a copy of $\mathbb G_m$, then one has $\mathcal S(G) = \ast$ by \cite[Theorem 2.3.2]{Asok-Hoyois-Wendt-2}.
\end{remark}

We end this note with an example of a singular, projective scheme $X$ for which $\mathcal S(X) \neq \ast$, but $\mathcal S(X)({\rm Spec}~F) = \ast$, for every finitely generated field extension $F$ of $k$.  This example was pointed out to the author by Chetan Balwe.

\begin{exam} 
\label{example main theorem}
Let $k$ be a field and let $C \hookrightarrow \mathbb P^2$ be an elliptic curve over $k$.  Let $X_1$ denote the blow up of $\mathbb P^1 \times C \hookrightarrow \mathbb P^1 \times \mathbb P^2$ at the closed point $P=((1:0), Q)$, where $Q$ is a closed point on $C$.  Let $E$ denote the exceptional divisor.  We have an obvious morphism $\pi: X_1 \to C$, which is the composition of the blow-up morphism $\phi:X_1 \to \mathbb P^1 \times C$ with the projection map $\mathbb P^1 \times C \to C$.  Let $X_2$ denote the plane $\{(0:1)\} \times \mathbb P^2$; so $X_1$ and $X_2$ intersect in $\{(0:1)\} \times C$.  Let $X: = X_1 \cup X_2$.  Since $\mathcal S(X_2) = \ast$, it is easy to see that $\mathcal S(X)({\rm Spec}~F) = \ast$, for every field extension $F$ of $k$.  By \cite[Theorem 2.4.3]{Asok-Morel}, we have $\pi_0^{\mathbb A^1}(X)({\rm Spec}~F) = \ast$, for every finitely generated field extension $F$ of $k$.  By \cite[Lemma 6.1.3]{Morel-connectivity}, we have $\pi_0^{\mathbb A^1}(X) \simeq \ast$.

Let $U$ be a smooth henselian local scheme of dimension $1$ over $k$ with closed point $u$.  Let $\gamma: U \to X_1$ be a morphism that maps $u$ on $E$ and the generic point of $U$ outside $E$.  We begin by considering naive $\mathbb A^1$-homotopies of $U$ inside $X_1$, starting at $\gamma$.  Let $h$ be a morphism $\mathbb A^1 \times U \to X_1$ such that $h|_{\{0\} \times \mathbb A^1} = \gamma$.  Since $C$ is $\mathbb A^1$-rigid, the composition $\pi \circ h: \mathbb A^1 \times U \to C$ factors through the projection $\mathbb A^1 \times U \to U$.  Thus, $h: \mathbb A^1 \times U \to X_1$ factors through $X_1 \times_C U$.  By assumption, $h$ is such that the point $Q$ on $C$ is in the image of $\pi \circ h$, that is, the image of the closed fiber $\mathbb A^1 \times \{u\}$ intersects the exceptional divisor $E$.  Hence, $h^{-1}(P)$ is a closed subscheme of $\mathbb A^1 \times U$ with support contained in $\mathbb A^1 \times \{u\}$.  Since $\phi \circ h$ can be lifted to $X_1$, we see that $h^{-1}(P)$ is a closed subscheme of $\mathbb A^1 \times U$ of codimension $1$.  Therefore, the support of $h^{-1}(P)$ must be exactly $\mathbb A^1 \times \{u\}$.  Thus, $h$ maps $\mathbb A^1 \times \{u\}$ into the exceptional divisor $E$.

Now, let $h: \mathbb A^1 \times U \to X$ be a morphism such that $h|_{\{0\} \times U} = \gamma$.  Since $\mathbb A^1 \times U$ is irreducible, this implies that $h$ factors through the inclusion of $X_1$ into $X$.  The discussion in the above paragraph shows that $h$ maps the whole closed fiber of $\mathbb A^1 \times U$ on $E$.  Hence, we cannot have $\mathcal S(X)(U) = \ast$ and consequently, $\mathcal S(X) \neq \ast$.  However, Theorem \ref{theorem chain connected} implies that $\mathcal S^2(X) = \ast$.  Since $\pi_0^{\mathbb A^1}(X) \simeq \ast$ as observed above, ${\rm Sing}_*^{\mathbb A^1} X$ cannot be $\mathbb A^1$-local in view of Lemma \ref{lemma pi0 surjection}.
\end{exam}

\begin{acknowledgement}
The author thanks Chetan Balwe for suggesting Example \ref{example main theorem} as well as for his comments on this article and Marc Hoyois for a helpful discussion during the International Colloquium on $K$-theory at TIFR.  The author also thanks the referee for a careful reading of the note and for comments that helped him improve the exposition.  
\end{acknowledgement}

\myaddress{Mathematisches Institut, Ludwig-Maximilians Universit\"at, Theresienstr. 39, 
D-80333 M\"unchen, Germany.}
\myemail{sawant@math.lmu.de}

\end{document}